\DeclareMathAlphabet{\mathpzc}{OT1}{pzc}{m}{it}
\newtheorem{thm}{Theorem}
\newtheorem{lem}[thm]{Lemma}
\newtheorem{propp}[thm]{Property}
\newtheorem{mainthm}[thm]{Main Theorem}
\theoremstyle{definition}
\newtheorem{defin}[thm]{Definition}
\newtheorem{rem}[thm]{Remark}
\newcommand{\CC}{\mathbb{C}}
\newcommand{\NN}{\mathbb{N}}
\begin{document}

\baselineskip=17pt

\title{Weighted generalization of the Ramadanov theorem and further considerations.}

\author{by
\sc Zbigniew Pasternak-Winiarski \rm (Warsaw) \\
and
\sc Pawe{\l} M. W\'ojcicki \rm (Warsaw)
}

\date{}
\maketitle
\renewcommand{\thefootnote}{}
\footnote{2010 \emph{Mathematics Subject Classification}: Primary 32A36; Secondary 32A25.}
\footnote{\emph{Key words and phrases}: Weighted Bergman kernel; Admissible weight; Sequence of domains.}
\renewcommand{\thefootnote}{\arabic{footnote}}
\setcounter{footnote}{0}

    \begin{abstract}
    We study the limit behavior of weighted Bergman kernels on a sequence of domains in a complex space $\CC^N$, and show that under some conditions on domains and weights, weighed Bergman kernels converge uniformly on compact sets. Then we give a weighted generalization of the theorem given in \cite[p. 38]{Skwarczy\'nski}, highlighting some special property of the domains, on which the weighted Bergman kernels converge uniformly. Moreover we will show that convergence of weighted Bergman kernels implies this property, which will give a characterization of the domains, for which the inverse of Ramadanov's theorem holds.

    \end{abstract}

    \section{Introduction}

    The Bergman kernel (see for instance \cite{Bergman, Jarnicki; Pflug, Krantz1, Krantz2, Shabat, Skwarczy\'nski; Mazur} ) has become a very important tool in geometric function theory, both in one and several complex variables. It turned out that not only classical Bergman kernel, but also weighted one can be useful. Let $D\subset\CC^N$ be a bounded domain. For example (see \cite{Englis}), if we denote by $\Pi : L^2(D)\rightarrow L^2_H(D)$ (the Bergman projection), we may define for any $\psi\in L^{\infty}(D)$, the Toeplitz operator $T_{\psi}$ as a (bounded linear) operator on $L^2_H(D)$ by $T_{\psi}f:=\Pi(\psi f)$. In particular, for $\psi>0$ on $D$ we have that $T_{\psi}$ is positive definite (so injective), so there exists an inverse $T_{\psi}^{-1}$. Taking positive continuous weight function $\mu\in L^{\infty}(D)$, $T_{\mu}$ extends to a bounded operator from $L^2_H(D,\mu)$ into $L^2_H(D)$, and $K_{D,\,\mu}(\cdot,x)=T_{\mu}^{-1}K_{D}(\cdot,x)$, where $K_{D,\,\mu}(\cdot,x)$ denotes the weighted Bergman kernel (associated to weighted Bergman space $L^2_H({D,\,\mu})$) at $x\in D$. Another practical application of weighted Bergman kernel may bo found in quantum theory (see \cite{Englis1} and  \cite{Odzijewicz}, and \cite{Pasternak-Winiarski})- we may consider a K\"ahler manifold $\Omega$ as a classical phase space of a physical system (many leading quantized classical systems have such a phase space). The Hilbert space $H$ of quantum states of such a system consists of the holomorphic sections of some Hermitian line bundle $E$ over $\Omega$, which belong to $L^2(\Omega,\,\mu)$ (for the Liouville measure $\mu$ on $\Omega$). One of the most interesting and important objects of this model is the reproducing kernel $K$ of $H$ (that is the kernel $K_{\Omega,\,\mu}$ ). This kernel makes the quantization of classical states possible as follows : one can assign to any classical state $z\in\Omega$ the quantum state $$v_z:=[K(\cdot,z)/||K(\cdot,z)||]\in H.$$ Using this embedding one can calculate the transition probability amplitude from one point to another : $$a(z,w):=|<v_z | v_w>|,\quad z,w\in\Omega.$$ Then the calculation of the Feynman path integral for such a system is equivalent to finding the reproducing kernel $K$ (that is $K_{\Omega,\,\mu}$). \\
      But in general, it is difficult to say anything about the unweighted (regular) or weighted kernel of a given domain. One of the classic results for unweighted Bergman kernels is Ramadanov's theorem (see \cite{Ramadanov}) :
    \begin{thm}[Ramadanov]
    Let $D_1\Subset D_2\Subset D_3\ldots$ be an increasing sequence of domains and set $D:=\bigcup_j D_j$. Then, $K_{D_j}\rightarrow K_D$ uniformly on compact subsets of $D\times D.$
    \end{thm}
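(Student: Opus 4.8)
The plan is to reduce everything to the behaviour of the kernels on the diagonal and then to propagate that information off the diagonal by elementary Hilbert-space and complex-analytic estimates. Throughout I use the extremal characterisation $K_{D}(z,z)=\sup\{|f(z)|^{2}: f\in L^{2}_{H}(D),\ \|f\|_{L^{2}(D)}\le 1\}$ together with the reproducing property. First I would record the monotonicity of the diagonal: if $z\in D_{j}$, then restricting any admissible competitor for $D_{j+1}$ to $D_{j}$ only decreases its $L^{2}$-norm while preserving its value at $z$, so $K_{D_{j}}(z,z)\ge K_{D_{j+1}}(z,z)\ge K_{D}(z,z)$. Hence $\phi_{j}(z):=K_{D_{j}}(z,z)$ decreases to a limit $c(z)\ge K_{D}(z,z)$ for each $z\in D$ (recall that any compact set, in particular any point, lies in $D_{j_{0}}$ for all large $j$).

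The crux is to show $c(z)=K_{D}(z,z)$. For this I would take the normalised extremal functions $f_{j}:=K_{D_{j}}(\cdot,z)/\sqrt{K_{D_{j}}(z,z)}$, so that $\|f_{j}\|_{L^{2}(D_{j})}=1$ and $|f_{j}(z)|=\sqrt{\phi_{j}(z)}$. By the sub-mean-value inequality the family $\{f_{j}\}$ is locally uniformly bounded on each $D_{m}$, so a diagonal argument over the exhaustion $D_{1}\subset D_{2}\subset\cdots$ combined with Montel's theorem yields a subsequence converging locally uniformly on $D$ to a holomorphic function $F$. Fatou's lemma gives $\int_{D_{m}}|F|^{2}\le\liminf_{j}\int_{D_{j}}|f_{j}|^{2}=1$ for every $m$, whence $\|F\|_{L^{2}(D)}\le 1$, while $|F(z)|=\sqrt{c(z)}$. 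The extremal property of $K_{D}$ then forces $K_{D}(z,z)\ge c(z)$, so $c(z)=K_{D}(z,z)$. This passage from the local competitors $f_{j}$ to a genuine global element of $L^{2}_{H}(D)$ is the main obstacle, and is precisely where the hypothesis $D=\bigcup_{j}D_{j}$ is used.

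Next I would upgrade this to uniform control. Since the continuous functions $\phi_{j}$ decrease to the continuous diagonal of $K_{D}$, Dini's theorem makes $K_{D_{j}}(w,w)\to K_{D}(w,w)$ uniform on each compact $L\subset D$; likewise $\int_{D_{j}}|K_{D}(\zeta,w)|^{2}\,dV(\zeta)\uparrow K_{D}(w,w)$ uniformly on $L$ by Dini. Expanding the square and using the reproducing property in $D_{j}$ to evaluate the cross term $\langle K_{D_{j}}(\cdot,w),K_{D}(\cdot,w)\rangle_{L^{2}(D_{j})}=K_{D}(w,w)$, I obtain
\[
\varepsilon_{j}(w):=\big\|K_{D_{j}}(\cdot,w)-K_{D}(\cdot,w)\big\|_{L^{2}(D_{j})}^{2}=K_{D_{j}}(w,w)-2\RE K_{D}(w,w)+\int_{D_{j}}|K_{D}(\zeta,w)|^{2}\,dV(\zeta),
\]
which therefore tends to $0$ uniformly for $w\in L$.

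Finally I would convert this $L^{2}$-estimate into the desired joint uniform convergence. Fixing a compact $L\subset D$ with $L\Subset D_{j_{0}}$, choose $r>0$ so that $\overline{B(z,r)}\subset D_{j_{0}}$ for every $z\in L$; applying the sub-mean-value inequality to the holomorphic function $\zeta\mapsto K_{D_{j}}(\zeta,w)-K_{D}(\zeta,w)$ gives, for $j\ge j_{0}$ and $z,w\in L$,
\[
|K_{D_{j}}(z,w)-K_{D}(z,w)|\le C_{L}\,\big\|K_{D_{j}}(\cdot,w)-K_{D}(\cdot,w)\big\|_{L^{2}(D_{j})}=C_{L}\sqrt{\varepsilon_{j}(w)},
\]
with $C_{L}$ independent of $j$ and $w$. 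Taking the supremum over $z,w\in L$ and invoking the uniform decay of $\varepsilon_{j}$ shows $K_{D_{j}}\to K_{D}$ uniformly on $L\times L$. Since every compact subset of $D\times D$ is contained in some such $L\times L$, this completes the proof.
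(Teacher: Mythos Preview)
Your argument is correct. The paper does not prove Theorem~1 directly---it is quoted as the classical result of Ramadanov---but it proves the weighted generalisation (Main Theorem~4), and specialising that proof to $\mu\equiv\mu_k\equiv 1$ gives a proof of Theorem~1 against which yours can be compared. The two routes diverge after the common first step (monotonicity of $K_{D_j}(t,t)$ via the extremal characterisation).

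The paper applies the Schwarz inequality $|K_{D_m}(z,t)|^2\le K_{D_m}(z,z)K_{D_m}(t,t)\le K_{D_n}(z,z)K_{D_n}(t,t)$ to get local boundedness of the full two-variable family $\{K_{D_m}\}$, invokes Montel to pass to subsequential limits $k(z,t)$, and then identifies any such limit with $K_D$ abstractly (Lemma~7): Fatou gives $\|k(\cdot,t)\|^2\le k(t,t)$, monotonicity gives $k(t,t)\ge K_D(t,t)$, and the uniqueness of the minimal-norm element forces $k=K_D$. No appeal to Dini is needed; only the one-sided bound from monotonicity matters.

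You instead first pin down the diagonal limit $c(z)=K_D(z,z)$ (via Montel on the normalised extremals and Fatou), then use Dini twice to make the diagonal convergence and the convergence of $\int_{D_j}|K_D(\cdot,w)|^2$ uniform on compacta, expand $\|K_{D_j}(\cdot,w)-K_D(\cdot,w)\|_{L^2(D_j)}^2$ explicitly using the reproducing identity $\langle K_D(\cdot,w),K_{D_j}(\cdot,w)\rangle_{L^2(D_j)}=K_D(w,w)$, and finally convert the resulting $L^2$ smallness to uniform smallness by the sub-mean-value inequality. This is more quantitative---it produces an explicit error bound $C_L\sqrt{\varepsilon_j(w)}$---but it leans on strict monotonicity of the exhaustion (for Dini), whereas the paper's ``Montel plus uniqueness-of-limit'' scheme only needs the one-sided diagonal bound and therefore extends more readily to the non-monotone, weighted setting of Main Theorem~4.
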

    It is very natural to ask whether the similar theorem for weighted Bergman kernels is true. Let's recall the Forelli-Rudin construction (see \cite{Forelli; Rudin} and \cite{Ligocka}) :
    If $\mu$ is a continuous weight on $D$ and $\Omega$ denotes the Hartogs domain $$\Omega=\{(z,w)\in D\times \CC^n : ||w||^{2n}<\mu(x)\}$$ in $\CC^{N+n} $, then $$K_{D,\mu}(z,p)=\frac{\pi^n}{n!}K_{\Omega}((z,0),(p,0))$$ (that is the weighted Bergman kernel $K_{D,\mu}(z,p)$ of $D$ is the restriction of the unweighted Bergman kernel $K_{\Omega}((z,w),(p,s))$ of $\Omega$ to the hyperplane $w=s=0$). Thus using Ramadanov's th. for the kernels $K_{\Omega_j}((z,0),(p,0))$ we can derive (under some conditions on weights - monotonicity for instance) the weighted analog of this theorem. And in fact, we may find some versions of this theorem in
    ({{\rm\cite[Prop. 3.17; Th. 3.18]{Jacobson}}} for instance), but considered weights are in the special form, as a moduli of holomorphic functions or $C^2$ functions, or as a product of one of those with the given weight $\psi$. Additionally, unweighted generalization of Ramadanov th. was given in \cite{Krantz} (in weighted case we can't proceed similarly, since we would have to strictly restrict weights of the kernels (we would need further assumptions for instance that $\mu_{\Omega_j}\circ\Theta_j=\mu_\Omega$ for any diffeomorphism $\Theta_j : \Omega\rightarrow\Omega_j$)). We can easily see that continuity of weight $\mu$ in the Forelli-Rudin construction provides basically that $\Omega$ is an open set. In this paper we will derive a weighted version of Ramadanov's theorem for so called "admissible weights" $\mu$ (we don't require $\mu$ to be continuous) without using Forelli - Rudin construction. It is very natural to consider such kind of weights, just by their definition (see below). We will prove the inverse of this theorem as well (see also \cite[p. 37]{Skwarczy\'nski} for an unweighted situation). In the second part of the paper we will show that density of holomorphic functions on a considered domain is very related to the convergence of the weighted Bergman kernels. In fact, we will get an equivalence in the unweighted case. This will provide with characterization of the domains, for which the inverse of Ramadanov's theorem holds. We shall start from the definitions and basic facts used in this paper.

    \section{Definitions and notations}

   Let $D\subset\CC^N$ be a domain, and let $W(D)$ be the set
   of weights on $D$, i.e., $W(D)$ is the set of all Lebesque measurable, real - valued,
   positive functions on $D$ (we consider two weights as equivalent if they are equal almost
   everywhere with respect to the Lebesque measure on $D$).
   If $\mu\in W(D)$, we denote by $L^2(D,\mu)$ the space of all Lebesque measurable, complex-valued,
   $\mu$-square integrable functions on $D$, equipped with the norm $||\cdot||_{D, \mu}:=||\cdot||_{\mu}$
   given by the scalar product
   \begin{align*}
   <f|g>_{\mu}:=\int_Df(z)\overline{g(z)}\mu(z)dV,\quad f, g\in L^2(D,\mu).
   \end{align*}
   The space $L^2_H(D,\mu)=H(D)\cap L^2(D,\mu)$ is called the \textbf{weighted Bergman space}, where $H(D)$ stands the space of all holomorphic functions on the domain $D$. For any $z\in D$ we define the evaluation functional $E_z$ on $L^2_H(D,\mu)$ by the formula
   \begin{align*}
   E_zf:=f(z),\quad f\in L^2_H(D,\mu).
   \end{align*}
   Let us recall the definition [Def. 2.1] of admissible weight given in \cite{Pasternak-Winiarski1}.
   \begin{defin}[Admissible weight]
   A weight $\mu\in W(D)$ is called an{ \textit{admi\-ssible weight} }, an a-weight for short, if $L^2_H(D,\mu)$
   is a closed subspace of $L^2(D,\mu)$ and for any $z\in D$ the evaluation functional $E_z$ is
   continuous on $L^2_H(D,\mu)$. The set of all a-weights on $D$ will be denoted by $AW(D)$.
   \end{defin}
   The definition of admissible weight provides us basically with existence and uniqueness of related Bergman kernel and completeness of the space $L^2_H(D,\mu)$.
   In \cite{Pasternak-Winiarski} the concept of a-weight was introduced, and in \cite{Pasternak-Winiarski1} several theorems concerning admissible
   weights are given. An illustrative one is :
   \begin{thm}\cite[Cor. 3.1]{Pasternak-Winiarski1}
   Let $\mu\in W(D)$. If the function $\mu^{-a}$ is locally integrable on $D$ for some $a>0$ then $\mu\in AW(D)$.
   \end{thm}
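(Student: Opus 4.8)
The plan is to reduce everything to a single local estimate that delivers both defining properties of an a-weight at once, namely: for every compact $K\subset D$ there is a constant $C_K$ with
\begin{equation*}
\sup_{z\in K}|f(z)|\le C_K\,\|f\|_{\mu}\qquad\text{for all } f\in L^2_H(D,\mu).
\end{equation*}
Granting this uniform pointwise domination, continuity of each evaluation functional $E_z$ is the special case $K=\{z\}$. Closedness then follows by a normal-families argument: if $(f_n)$ is Cauchy in $L^2_H(D,\mu)$, it converges in $L^2(D,\mu)$ to some $f$, and the estimate applied to $f_n-f_m$ shows $(f_n)$ is uniformly Cauchy on each compact $K$, hence converges locally uniformly to a holomorphic $g$; comparing the two limits gives $f=g$ a.e., so $f\in H(D)\cap L^2(D,\mu)=L^2_H(D,\mu)$.

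To prove the estimate I would exploit that $|f|^{s}$ is (pluri)subharmonic for any $s>0$ whenever $f$ is holomorphic. For a ball $B=B(z_0,r)\Subset D$ the solid sub-mean value inequality gives $|f(z_0)|^{s}\le \frac{1}{V(B)}\int_{B}|f|^{s}\,dV$. I then split the integrand as $|f|^{s}=(|f|^{2}\mu)^{s/2}\,\mu^{-s/2}$ and apply H\"older's inequality with conjugate exponents $p=2/s$ and $q=2/(2-s)$. The exponent is chosen as $s=\frac{2a}{1+a}$, which lies in $(0,2)$ and yields $p=\frac{1+a}{a}$, $q=1+a$; with this choice $(|f|^{2}\mu)^{s/2}$ is raised exactly to the first power of $|f|^{2}\mu$, while $\mu^{-s/2}$ is raised precisely to the power $a$. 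Bounding the first factor by the full norm $\|f\|_{\mu}$ and taking $s$-th roots produces
\begin{equation*}
|f(z_0)|\le V(B)^{-\frac{1+a}{2a}}\left(\int_{B}\mu^{-a}\,dV\right)^{\frac{1}{2a}}\|f\|_{\mu},
\end{equation*}
which is finite precisely because $\mu^{-a}$ is locally integrable.

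For uniformity over a compact $K$, I would fix $r<\mathrm{dist}(K,\partial D)$ and set $K_r:=\bigcup_{z_0\in K}\overline{B(z_0,r)}\Subset D$; since $V(B(z_0,r))$ depends only on $r$ and $N$, and $\int_{B(z_0,r)}\mu^{-a}\,dV\le\int_{K_r}\mu^{-a}\,dV<\infty$ uniformly in $z_0$, the constant $C_K$ is finite and independent of $z_0\in K$. The one genuine subtlety, and the step I expect to be the main obstacle, is exactly the range $0<a<1$: there the naive Cauchy--Schwarz estimate (the case $s=2$) fails because $\mu^{-1}$ need not be locally integrable, and the whole point of introducing the fractional power $s=\frac{2a}{1+a}<2$ via subharmonicity of $|f|^{s}$ is to replace $\mu^{-1}$ by the integrable weight $\mu^{-a}$. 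For $a\ge 1$ one could argue more directly, since $\mu^{-a}\in L^1_{\mathrm{loc}}$ already forces $\mu^{-1}\in L^1_{\mathrm{loc}}$ on sets of finite measure by H\"older, but the uniform treatment above covers all $a>0$ simultaneously.
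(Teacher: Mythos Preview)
The paper itself does not prove this statement; it is quoted verbatim as \cite[Cor.~3.1]{Pasternak-Winiarski1} and serves only as an illustrative example of admissible weights, so there is no in-paper proof to compare against.

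That said, your argument is correct and is essentially the standard one. The key ingredients---plurisubharmonicity of $|f|^{s}$ for holomorphic $f$ and any $s>0$, the solid sub-mean-value inequality over balls, and the H\"older splitting $|f|^{s}=(|f|^{2}\mu)^{s/2}\mu^{-s/2}$ with the calibrated exponent $s=\tfrac{2a}{1+a}$---are exactly what is needed, and your computation of the resulting constant $V(B)^{-(1+a)/(2a)}\bigl(\int_{B}\mu^{-a}\bigr)^{1/(2a)}$ checks out. The passage from the pointwise estimate to closedness of $L^{2}_{H}(D,\mu)$ via a normal-families/locally-uniform-limit argument is also fine; the only small point worth making explicit is that one identifies the $L^{2}$-limit $f$ with the locally uniform limit $g$ by passing to an a.e.-convergent subsequence of the $L^{2}$-Cauchy sequence, which you implicitly use.

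Your remark about the range $0<a<1$ is well taken: this is precisely where the fractional-power trick earns its keep, since the naive $s=2$ Cauchy--Schwarz bound would require $\mu^{-1}\in L^{1}_{\mathrm{loc}}$, which is not assumed.
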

   Now, let 's fix a point $t\in D$ and minimize the norm $||f||_{\mu}$ in the class $E_t=\{f\in L^2_H(D,\mu); f(t)=1\}$.
   It can be proved in a similar way as in the classical case, that if $\mu$ is an admissible weight then there exists exactly one function minimizing the norm. Let us denote it by $\phi_{\mu}(z,t)$.
   \textbf{Weigted Bergman kernel function $K_{D,\,\mu}$} is defined as follows :
   \begin{align*}
   K_{D,\,\mu}(z,t)=\frac{\phi_{\mu}(z,t)}{||\phi_{\mu}||_{\mu}^2}.
   \end{align*}
\section{Variations on the Ramadanov theorem and domain dependence.}
In this section we study the limit behavior of weighted Bergman kernels for admissible weights. Moreover we give a weighted characterization of the Bergman kernel (see also \cite[p. 36]{Skwarczy\'nski}) by means of which we prove kind of converse of Ramadanov theorem. We show that density of holomorphic functions is very related to the convergence of weighted Bergman kernels, and in the case of $\mu_n\equiv 1$ we even have an equivalence (see also \cite{Skwarczy\'nski; Iwi\'nski}).
 \subsection{Weighted generalization of the Ramadanov theorem }
   \begin{mainthm}[Weighted generalization of the Ramadanov theorem]
   Let $\{D_i\}_{i=1}^{\infty}$ be a sequence of domains in $\CC^N$
   and set $D:=\bigcup_jD_j$. Let $\mu\in AW(D),\mu_k\in AW(D_k)$ (extend $\mu_k$ by $\mu$ on $D$).
   Assume moreover that
   \begin{enumerate}
     \item[a)]For any $n\in\NN$ there is $N=N(n)$ s.t. $D_n\subset D_m$ and $\mu_n(z)\leq\mu_m(z)\leq\mu(z)$ for $m\geq N(n),\,\,z\in D_n$.
     \item[b)] $\displaystyle\mu_k\xrightarrow[k\to\infty]{}\mu$ pointwise a.e. on $D$.
   \end{enumerate}
   Then
   \begin{align*}
   \lim_{k\to\infty}K_{D_k, \mu_k}=K_{D, \mu}
   \end{align*}
   locally uniformly on $D\times D$.
   \end{mainthm}
  The first step in the proof is to show the monotonicity property for the weighted kernels.
   Then we should check that the limit of the sequence of weighted kernels of the domains $D_n$,
   if exists, is equal to $K_{D,\mu}$.
    \begin{lem}[Monotonicity property]
    For any $n\in\NN,\,\,t\in D_n$ the inequality $K_{D_n, \mu_n}(t,t)\geq K_{D_m, \mu_m}(t,t)$ holds for $m\geq N(n)$.
    \end{lem}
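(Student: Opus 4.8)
The plan is to exploit the extremal characterization of the weighted Bergman kernel on the diagonal. From the definition recalled above, if $\phi_{\mu}(\cdot,t)$ denotes the unique norm-minimizer in the class $E_t=\{f\in L^2_H(D,\mu): f(t)=1\}$, then, since $\phi_{\mu}(t,t)=1$,
\begin{align*}
K_{D,\mu}(t,t)=\frac{\phi_{\mu}(t,t)}{\|\phi_{\mu}\|_{\mu}^2}=\frac{1}{\|\phi_{\mu}(\cdot,t)\|_{\mu}^2}=\frac{1}{\min\{\|f\|_{\mu}^2 : f\in L^2_H(D,\mu),\ f(t)=1\}}.
\end{align*}
Thus proving $K_{D_n,\mu_n}(t,t)\geq K_{D_m,\mu_m}(t,t)$ is equivalent to proving the reverse inequality for the corresponding minimal norms, i.e.\ that the minimal norm over $D_n$ with weight $\mu_n$ does not exceed the minimal norm over $D_m$ with weight $\mu_m$.

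First I would fix $n$, take $m\geq N(n)$, and let $\phi_m:=\phi_{\mu_m}(\cdot,t)$ be the extremal function for the pair $(D_m,\mu_m)$, so that $\phi_m$ is holomorphic on $D_m$, $\phi_m(t)=1$, and $\|\phi_m\|_{\mu_m}^2=1/K_{D_m,\mu_m}(t,t)$. Since by hypothesis (a) we have $D_n\subset D_m$ and $t\in D_n$, the restriction $\phi_m|_{D_n}$ is holomorphic on $D_n$ and satisfies $(\phi_m|_{D_n})(t)=1$. The key step is then the chain of inequalities
\begin{align*}
\big\|\phi_m|_{D_n}\big\|_{\mu_n}^2=\int_{D_n}|\phi_m|^2\mu_n\,dV\leq\int_{D_n}|\phi_m|^2\mu_m\,dV\leq\int_{D_m}|\phi_m|^2\mu_m\,dV=\|\phi_m\|_{\mu_m}^2,
\end{align*}
where the first inequality uses $\mu_n\leq\mu_m$ on $D_n$ from (a), and the second uses $D_n\subset D_m$ together with nonnegativity of the integrand. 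In particular $\phi_m|_{D_n}$ has finite $\mu_n$-norm, hence lies in $L^2_H(D_n,\mu_n)$ and is an admissible competitor in the class defining the minimal norm for $(D_n,\mu_n)$.

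Consequently the minimal norm for $(D_n,\mu_n)$ is bounded above by $\|\phi_m|_{D_n}\|_{\mu_n}^2\leq\|\phi_m\|_{\mu_m}^2$, which is exactly the reciprocal relation needed to conclude $K_{D_n,\mu_n}(t,t)\geq K_{D_m,\mu_m}(t,t)$. I do not expect a serious obstacle here: the argument is a direct comparison of two extremal problems, and the only point requiring a line of justification is that the restricted extremal function is a legitimate element of the competing class, which is precisely what the norm estimate above guarantees. The monotonicity of the weights enters only through the single inequality $\mu_n\leq\mu_m$ on $D_n$, and the nesting $D_n\subset D_m$ only through the domain-monotonicity of the integral of a nonnegative function; note that hypothesis (b) and the bound $\mu_m\leq\mu$ are not needed for this lemma and will instead be used later to identify the limit.
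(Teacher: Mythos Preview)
Your argument is essentially the same as the paper's: both exploit the extremal characterization on the diagonal and use the restriction of the $(D_m,\mu_m)$-minimizer as a competitor in the $(D_n,\mu_n)$-problem, obtaining the same chain of inequalities from $\mu_n\leq\mu_m$ on $D_n$ and $D_n\subset D_m$. The only point the paper treats that you leave implicit is the degenerate case $K_{D_m,\mu_m}(t,t)=0$, where the class $E_t$ is empty and no minimizer $\phi_m$ exists; there the inequality is trivial since $K_{D_n,\mu_n}(t,t)\geq 0$.
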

     \begin{proof}
     Let us fix $n\in\NN,\,\,t\in D_n$. Let $m\geq N(n)$.
     The inequality in the statement of the lemma is true if $K_{D_m,\,\mu_m}(t,t)=0$. Then suppose that
     $K_{D_m,\,\mu_m}(t,t)>0$.
     In the proof we will use the simple remark that
     \begin{align*}
     \frac{1}{K_{D_n,\,\mu_n}(t,t)}= \int_{D_n}\left|\frac{K_{D_n,\,\mu_n}(s,t)}{K_{D_n,\,\mu_n}(t,t)}\right|^2\mu_n(s)dV
     \end{align*}
      since
     $K_{D_n,\,\mu_n}(t,t)>0$ and
     \begin{align*}
     K_{D_n,\,\mu_n}(t,t)=\displaystyle\int_{D_n}\overline{K_{D_n,\,\mu_n}(z,t)}
     K_{D_n,\,\mu_n}(z,t)\mu_n(z)dV
     \end{align*}
     by the reproducing property (\cite{Pasternak-Winiarski}) for $f(\cdot)=K_{D_n,\,\mu_n}(\cdot,t)$. Moreover the term
     $\displaystyle\frac{K_{D_n,\,\mu_n}(\cdot,t)}{K_{D_n,\,\mu_n}(t,t)}$
     is the only element in the class $\{f\in L^2_H(D_n,\mu_n), f(t)=1 \}$ with the minimal norm.
     Thus for $m\geq N(n)$ we have

     \begin{align*}
  \frac{1}{K_{D_n,\,\mu_n}(t,t)}
  &\leq \int_{D_n}\left|\frac{K_{D_m,\,\mu_m}(s,t)}{K_{D_m,\,\mu_m}(t,t)}\right|^2\mu_n(s)dV\\[12pt]
  &\leq \int_{D_n}\left|\frac{K_{D_m,\,\mu_m}(s,t)}{K_{D_m,\,\mu_m}(t,t)}\right|^2\mu_m(s)dV \\[12pt]
  &\leq \int_{D_m}\left|\frac{K_{D_m,\,\mu_m}(s,t)}{K_{D_m,\,\mu_m}(t,t)}\right|^2\mu_m(s)dV=
     \frac{1}{K_{D_m,\,\mu_m}(t,t)}.
    \end{align*}

    \end{proof}
   \begin{rem}
     One can show similarly that $K_{D_n,\,\mu_n}(t,t)\geq K_{D,\,\mu}(t,t)$ for \\$n\in\NN$.
   \end{rem}
   \begin{lem}[Uniqueness of the limit]
    If $\displaystyle\lim_{n\to\infty}K_{D_n,\,\mu_n}=k$ locally uniformly on $D\times D$, then
    $k=K_{D,\,\mu}$.
    \end{lem}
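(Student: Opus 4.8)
The plan is to show that the limit $k$ satisfies the properties characterising the weighted Bergman kernel of $(D,\mu)$, and then to pin down the value of $k$ by combining an $L^2$-norm bound with the monotonicity already established. First I would record that $k$ is holomorphic in its first variable: being a locally uniform limit of the holomorphic functions $K_{D_n,\mu_n}(\cdot,t)$, the standard theorem on locally uniform limits of holomorphic functions applies, so $k(\cdot,t)\in H(D)$ for each fixed $t\in D$.

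The crucial step, and the one I expect to be the main obstacle, is to prove that $k(\cdot,t)\in L^2_H(D,\mu)$ together with the norm estimate $\|k(\cdot,t)\|_\mu^2\le k(t,t)$; here the expanding domains, the varying weights and the merely \emph{local} uniform convergence all have to be reconciled at once. I would fix $t$ and a compact exhaustion $L_1\subset L_2\subset\cdots$ of $D$. For a compact $L$, assumption a) guarantees $L\subset D_n$ for all large $n$, and on $L$ one has $K_{D_n,\mu_n}(\cdot,t)\to k(\cdot,t)$ uniformly while $\mu_n\to\mu$ a.e.\ by b); hence $|K_{D_n,\mu_n}(s,t)|^2\mu_n(s)\to|k(s,t)|^2\mu(s)$ a.e.\ on $L$. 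Since the integrands are nonnegative, Fatou's lemma together with the reproducing identity $\int_{D_n}|K_{D_n,\mu_n}(s,t)|^2\mu_n(s)\,dV=K_{D_n,\mu_n}(t,t)$ used already in the Monotonicity Lemma gives
\[
\int_L|k(s,t)|^2\mu(s)\,dV\le\liminf_{n\to\infty}\int_L|K_{D_n,\mu_n}(s,t)|^2\mu_n(s)\,dV\le\lim_{n\to\infty}K_{D_n,\mu_n}(t,t)=k(t,t).
\]
Letting $L=L_j\uparrow D$ and invoking monotone convergence yields $\|k(\cdot,t)\|_\mu^2\le k(t,t)<\infty$, so indeed $k(\cdot,t)\in L^2_H(D,\mu)$. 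Everything after this point is soft.

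With $k(\cdot,t)\in L^2_H(D,\mu)$ in hand I would apply the reproducing property of the genuine kernel $K_{D,\mu}$ to the function $k(\cdot,t)$ and then invoke Cauchy--Schwarz and the bound just proved:
\[
k(t,t)=\int_D k(s,t)\,\overline{K_{D,\mu}(s,t)}\,\mu(s)\,dV\le\|k(\cdot,t)\|_\mu\,\|K_{D,\mu}(\cdot,t)\|_\mu\le\sqrt{k(t,t)}\,\sqrt{K_{D,\mu}(t,t)},
\]
where $\|K_{D,\mu}(\cdot,t)\|_\mu^2=K_{D,\mu}(t,t)$ again by reproducing. This forces $k(t,t)\le K_{D,\mu}(t,t)$. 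On the other hand the Remark following the Monotonicity Lemma gives $K_{D_n,\mu_n}(t,t)\ge K_{D,\mu}(t,t)$, so passing to the limit yields $k(t,t)\ge K_{D,\mu}(t,t)$; hence $k(t,t)=K_{D,\mu}(t,t)$ for every $t\in D$. Finally, this diagonal equality forces every inequality in the displayed chain to be an equality, so equality holds in Cauchy--Schwarz and $k(\cdot,t)$ is proportional to $K_{D,\mu}(\cdot,t)$; evaluating the proportionality at the diagonal (equivalently, using the reproducing identity) fixes the constant as $1$, whence $k(\cdot,t)=K_{D,\mu}(\cdot,t)$ for all $t$, i.e.\ $k=K_{D,\mu}$ on $D\times D$. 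The degenerate case $k(t,t)=0$ simply forces both kernels to vanish at $t$ and is disposed of directly.
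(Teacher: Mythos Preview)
Your proof is correct and follows the same skeleton as the paper's: holomorphicity of $k(\cdot,t)$ via Weierstrass, the key norm bound $\|k(\cdot,t)\|_\mu^2\le k(t,t)$ obtained by Fatou on compacts, and the reverse diagonal inequality $k(t,t)\ge K_{D,\mu}(t,t)$ from monotonicity. The only divergence is in the final identification step. The paper argues through the \emph{extremal} characterisation: $\frac{K_{D,\mu}(\cdot,t)}{K_{D,\mu}(t,t)}$ is the unique norm-minimiser among functions with value $1$ at $t$, and one checks that $\frac{k(\cdot,t)}{k(t,t)}$ has norm no larger, hence coincides with it. You instead apply the reproducing property to $k(\cdot,t)$ and use the equality case of Cauchy--Schwarz. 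The two finishes are essentially dual to one another; your route is marginally more self-contained (it avoids invoking uniqueness of the extremal element), while the paper's route dovetails with the characterisation lemma (Lemma~9) used later for the decreasing case.
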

     \begin{proof}
     Since the sequence $(K_{D_n,\,\mu_n})$ converges locally uniformly on $D\times D$ and any function
     $K_{D_n,\,\mu_n}$ is continuous we obtain that $k$ is continuous on $D\times D$.
     Let's recall that
     \begin{align}\label{E:1}
  \int_{D_m}\overline{K_{D_m,\,\mu_m}(z,t)}K_{D_m,\,\mu_m}(z,t)
     \mu_m(z)dV=K_{D_m,\,\mu_m}(t,t).
    \end{align}
     Fix a compact set $E\subset D$, and $t\in E$. For $m$ large enough $E\subset D_m$ and $t\in D_m$.
     By Fatou's lemma
     \begin{align*}
  \int_E|k(z,t)|^2\mu(z)dV
  &\leq \liminf_{m\to\infty}\int_{E}|K_{D_m,\,\mu_m}(z,t)|^2\mu_m(z)dV \\[12pt]
  &\leq  \liminf_{m\to\infty}\int_{D_m}|K_{D_m,\,\mu_m}(z,t)|^2\mu_m(z)dV\\[12pt]
  &= \liminf_{m\to\infty}K_{D_m,\,\mu_m}(t,t)=k(t,t).
    \end{align*}
     Since $E$ is an arbitrary compact set,
     \begin{align}\label{E:1}
   &\int_D|k(z,t)|^2\mu(z)dV\leq k(t,t).
    \end{align}
     By Weierstrass theorem $k(\cdot,t)\in H(D)$, so $k(\cdot,t)\in L^2_H(D,\mu)$.
     \newline By Lemma $5$ we get
     \begin{align*}
     K_{D_n,\,\mu_n}(t,t)\geq K_{D,\,\mu}(t,t)
     \end{align*}
     for $n=1, 2,\ldots,\quad t\in D$. In the limit $n\to\infty$ we obtain
     \begin{align*}
     k(t,t)\geq K_{D,\,\mu}(t,t).
     \end{align*}
     It suffices to show that $k(z,t)=K_{D,\,\mu}(z,t)$.
     We should consider two cases :
     \\
     \\
     $1.$ $K_{D,\,\mu}(t,t)=0$, for some $t\in D.$
     \\
     \\
     Then for $z\in D$,
     $K_{D,\,\mu}(z,t)=0$ since $\displaystyle K_{D,\,\mu}(t,t)=\int_D|K_{D,\,\mu}(z,t)|^2\mu(z)dV,$
     and $K_{D,\,\mu}$ is continuous with respect to $z.$ Thus for any $f\in L^2_H(D,\mu)$
     \begin{align*}
     f(t)=\int_Df(w)K_{D,\,\mu}(t,w)\mu(w)dV=0
     \end{align*}
     and we have that $k(t,t)=0$, since $f(\cdot):=k(\cdot,t)\in L^2_H(D,\,\mu).$ \\But $\displaystyle\int_D|k(z,t)|^2\mu(z)dV\leq k(t,t),$
     so $k(z,t)=0$, for $z\in D.$
     \\
     \\
     $2.$ $K_{D,\,\mu}(t,t)>0$, for some $t\in D$.
     \\
     \\
     Then $k(t,t)>0,$ since $k(t,t)\geq K_{D,\,\mu}(t,t)>0.$ We will use once more the well known fact, that in the set $\{f\in L^2_H(D,\mu), f(t)=1 \}$ (for some fixed $t\in D$) function
     $\displaystyle\frac{K_{D,\,\mu}(\cdot,t)}{K_{D,\,\mu}(t,t)}$ is the only minimal element.
     It is easy to see, that $\displaystyle\frac{k(\cdot,t)}{k(t,t)}$ belongs to this set
     (since $k(\cdot,t)\in L^2_H(D,\mu)$) and moreover by (2) $||k(\cdot,t)||_{\mu}\leq\sqrt{k(t,t)}.$ \\Thus
     \begin{align*}
  \left|\left|\frac{k(\cdot,t)}{k(t,t)}\right|\right|_{\mu}
  &\leq \frac{\sqrt{k(t,t)}}{k(t,t)}=\frac{1}{\sqrt{k(t,t)}}\leq \frac{1}{\sqrt{K_{D,\,\mu}(t,t)}}\\[12pt]
  &= \left|\left|\frac{K_{D,\,\mu}(\cdot,t)}{K_{D,\,\mu}(t,t)}\right|\right|_{\mu}.
    \end{align*}

     By the minimality property of $\displaystyle\frac{K_{D,\,\mu}(\cdot,t)}{K_{D,\,\mu}(t,t)}$
     we get from the above, that :

     \begin{align*}
  \left|\left|\frac{k(\cdot,t)}{k(t,t)}\right|\right|_{\mu}=
     \frac{1}{\sqrt{k(t,t)}}= \frac{1}{\sqrt{K_{D,\,\mu}(t,t)}}=
     \left|\left|\frac{K_{D,\,\mu}(\cdot,t)}{K_{D,\,\mu}(t,t)}
     \right|\right|_{\mu}.
    \end{align*}
     So $k(t,t)=K_{D,\,\mu}(t,t)$, and $k(z,t)=K_{D,\,\mu}(z,t)$ for $z, t\in D$.
     \end{proof}
     \begin{proof}[Proof of the main theorem]
     We will show that for $n\in\NN$ the sequence $\{K_{D_m,\,\mu_m}\}_{m\geq N(n)}$ is locally bounded on $D_n\times D_n$.
     \\
     Using well known version of Schwarz inequality for reproducing kernels and Lemma $5$
     we obtain for any $z, t\in D_n$.
     \begin{align*}
  |K_{D_m,\,\mu_m}(z,t)|
  &\leq \sqrt{K_{D_m,\,\mu_m}(z,z)}\sqrt{K_{D_m,\,\mu_m}(t,t)}\\[12pt]
  &\leq \sqrt{K_{D_n,\,\mu_n}(z,z)}\sqrt{K_{D_n,\,\mu_n}(t,t)},\quad m\geq N(n).
    \end{align*}
     The term in the right hand side of the estimation above is locally bounded on $D_n\times D_n.$
     By Montel's property, any subsequence of $\{K_{D_m,\,\mu_m}\}$ has a subsequence convergent locally
     uniformly on $D\times D$. By Lemma $7$ the limit does not depend on a subsequence and is
     identically equal to $K_{D,\,\mu}$. Thus
     \begin{align*}
     \lim_{m\to\infty}K_{D_m,\,\mu_m}(z,t)=K_{D,\,\mu}(z,t)
     \end{align*}
     locally uniformly on $D\times D$.
     \end{proof}
  \begin{rem}
  Look that the case of increasing sequence of domains is a subcase of the Main Theorem $4$ (see \cite{Skwarczy\'nski; Mazur} for the very interesting considerations, and unweighted kind of Lemma $7$).
  \end{rem}
    \subsection{Characterization of the weighted Bergman kernel and further remarks on "decreasing-like"
    sequence of domains}

    In \cite[p. 36]{Skwarczy\'nski} some characterization lemma for unweighted Bergman kernels is given.
    One can easily conclude similar one for weighted Bergman kernels, as the following Lemma $9$ shows.
    The proof is attached only for the convenience of the reader.

    \begin{lem}
    Denote by $S_{\mu,\,t}\subset L^2_H(D,\mu)$ the set of all functions $f$ such that $f(t)\geq 0$
    and $||f||_{\mu}\leq\sqrt{f(t)}$, where $t\in D$ is fixed. Then the weighted Bergman
    function $\varphi_{\mu,\,t}(\cdot):=K_{D,\,\mu}(\cdot,t)$ is uniquelly characterized by the proper\-ties :
    \begin{enumerate}
    \item[(i)] $\varphi_{\mu,\,t}\in S_{\mu,\,t}$
    \item[(ii)] if $f\in S_{\mu,\,t}$ and $f(t)\geq \varphi_{\mu,\,t}(t)$, then $f\equiv\varphi_{\mu,\,t}$.
    \end{enumerate}
    \end{lem}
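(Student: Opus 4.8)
The plan is to reduce everything to the reproducing property together with the Cauchy--Schwarz inequality. First I would record the two identities that make the set $S_{\mu,\,t}$ natural: applying the reproducing property to $f(\cdot)=K_{D,\,\mu}(\cdot,t)$ gives
\[
\varphi_{\mu,\,t}(t)=K_{D,\,\mu}(t,t)=\int_D|K_{D,\,\mu}(z,t)|^2\mu(z)\,dV=||\varphi_{\mu,\,t}||_{\mu}^2 .
\]
In particular $\varphi_{\mu,\,t}(t)\geq 0$ and $||\varphi_{\mu,\,t}||_{\mu}=\sqrt{\varphi_{\mu,\,t}(t)}$, which is exactly condition (i) (indeed with equality in the defining inequality of $S_{\mu,\,t}$). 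So (i) is essentially free.

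The heart of the matter is the following estimate, which I would establish next: for every $f\in S_{\mu,\,t}$ one has $f(t)\leq K_{D,\,\mu}(t,t)=\varphi_{\mu,\,t}(t)$; that is, $\varphi_{\mu,\,t}$ maximizes the evaluation $f\mapsto f(t)$ over $S_{\mu,\,t}$. Indeed, the reproducing property gives $f(t)=\langle f\,|\,\varphi_{\mu,\,t}\rangle_{\mu}$, and Cauchy--Schwarz together with the identity above yields $f(t)\leq ||f||_{\mu}\sqrt{K_{D,\,\mu}(t,t)}$. Using the membership $||f||_{\mu}\leq\sqrt{f(t)}$ this becomes $f(t)\leq\sqrt{f(t)}\,\sqrt{K_{D,\,\mu}(t,t)}$, and dividing by $\sqrt{f(t)}$ (the case $f(t)=0$ being trivial) gives the claim.

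With this estimate in hand, condition (ii) is almost immediate. If $f\in S_{\mu,\,t}$ and $f(t)\geq\varphi_{\mu,\,t}(t)$, the estimate forces $f(t)=K_{D,\,\mu}(t,t)$, so every inequality in the chain above becomes an equality. The degenerate case $K_{D,\,\mu}(t,t)=0$ I would dispose of directly, since then $||f||_{\mu}\leq\sqrt{f(t)}=0$ forces $f\equiv 0\equiv\varphi_{\mu,\,t}$. When $K_{D,\,\mu}(t,t)>0$, the equality case of Cauchy--Schwarz is the one genuinely delicate point: it forces $f$ and $\varphi_{\mu,\,t}$ to be positively proportional, and since $f(t)=\varphi_{\mu,\,t}(t)>0$ the proportionality constant must equal $1$, so $f\equiv\varphi_{\mu,\,t}$.

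Finally, for the uniqueness assertion I would show that any $\psi$ satisfying (i) and (ii) coincides with $\varphi_{\mu,\,t}$. By (i), $\psi\in S_{\mu,\,t}$, so the maximizing estimate gives $\psi(t)\leq K_{D,\,\mu}(t,t)=\varphi_{\mu,\,t}(t)$. On the other hand $\varphi_{\mu,\,t}\in S_{\mu,\,t}$ (its own property (i)) and $\varphi_{\mu,\,t}(t)\geq\psi(t)$, so property (ii) \emph{for $\psi$} yields $\varphi_{\mu,\,t}\equiv\psi$. I expect the only real work to be the equality case of Cauchy--Schwarz in the proof of (ii); everything else is a formal consequence of the reproducing property and the maximizing estimate.
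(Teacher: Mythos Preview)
Your proof is correct, but it takes a somewhat different route than the paper's. The paper argues property (ii) via the \emph{minimization} characterization: it normalizes $f$ to $f(\cdot)/f(t)$, observes that this element lies in $\{h\in L^2_H(D,\mu):h(t)=1\}$, and computes
\[
\Bigg\|\frac{f(\cdot)}{f(t)}\Bigg\|_\mu\leq\frac{1}{\sqrt{f(t)}}\leq\frac{1}{\sqrt{\varphi_{\mu,\,t}(t)}}=\Bigg\|\frac{\varphi_{\mu,\,t}(\cdot)}{\varphi_{\mu,\,t}(t)}\Bigg\|_\mu,
\]
then invokes the uniqueness of the norm-minimizer $\varphi_{\mu,\,t}/\varphi_{\mu,\,t}(t)$ (which is exactly how the paper defined $K_{D,\,\mu}$) to force $f/f(t)=\varphi_{\mu,\,t}/\varphi_{\mu,\,t}(t)$ and $f(t)=\varphi_{\mu,\,t}(t)$. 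You instead work with the dual \emph{maximization} picture: the reproducing property plus Cauchy--Schwarz gives $f(t)\leq K_{D,\,\mu}(t,t)$ on $S_{\mu,\,t}$, and the equality case of Cauchy--Schwarz yields $f\equiv\varphi_{\mu,\,t}$. Both arguments are standard and essentially equivalent; yours has the mild advantage of not appealing to the extremal-problem definition of the kernel (only to the reproducing property), while the paper's stays closer to its own setup. For the uniqueness clause, the paper gives the purely formal symmetric argument (if $\varphi_1,\varphi_2$ both satisfy (i),(ii), compare $\varphi_1(t)$ and $\varphi_2(t)$ and apply (ii) in the appropriate direction), whereas you reuse your maximizing estimate; either works.
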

    \begin{proof}
    One can easily see, that there exists at most one element $\varphi_{\mu,\,t}\in L^2_H(D,\mu)$
    which satisfies (i) and (ii) (if $\varphi_1, \varphi_2$ satisfies (i) and (ii), then both $\varphi_1(t)$ and $\varphi_2(t)$ are nonnegative, and either $\varphi_1(t)\geq\varphi_2(t)$ and then $\varphi_1\equiv\varphi_2$ or $\varphi_2(t)\geq\varphi_1(t)$ and then $\varphi_2\equiv\varphi_1$).
    We shall show $\varphi_{\mu,\,t}(\cdot)=K_{D,\,\mu}(\cdot,t)$ has both properties.
    \newline We have
    \begin{align*}
    \varphi_{\mu,\,t}(t)=K_{D,\,\mu}(t,t)\geq 0
    \end{align*}
    and
    \begin{align*}
    ||K_{D,\,\mu}(\cdot,t)||_{\mu}^2=K_{D,\,\mu}(t,t)
    \end{align*}
    (see (1)).
    Now let $f\in S_{\mu,\,t}$. If $f(t)=0$, then $\varphi_{\mu,\,t}(t)=0$. Hence $||f||_\mu =
    ||\varphi_{\mu,\,t}||_\mu=0$, so $f\equiv 0\equiv \varphi_{\mu,\,t}$.
    \\
    Assume now $f(t)>0$. By the definition of weighted Bergman kernel function $\displaystyle\frac
    {\varphi_{\mu,\,t}(\cdot)}{\varphi_{\mu,\,t}(t)}$ is uniquely characterized as an element in the set
    $\{h\in L^2_H(D,\mu), \newline h(t)=1\}$ with the minimal norm. But $\displaystyle\frac{f(\cdot)}{f(t)}$
    belongs to this set as well, moreover
    \begin{align*}
    \Bigg\|\frac{f(\cdot)}{f(t)}\Bigg\|_\mu=
    \frac{||f||_\mu}{\sqrt{f(t)}\sqrt{f(t)}}
    \leq\frac{1}{\sqrt{f(t)}}\leq\frac{1}{\sqrt{\varphi_{\mu,\,t}(t)}}=
    \Bigg\|\frac{\varphi_{\mu,\,t}(\cdot)}{\varphi_{\mu,\,t}(t)}\Bigg\|_\mu.
    \end{align*}
    Thus (by minimality)
    \begin{align*}
    \frac{1}{\sqrt{f(t)}}=\frac{1}{\sqrt{\varphi_{\mu,\,t}(t)}}
    \end{align*}
    and for any $z\in D$
    \begin{align*}
    \frac{f(z)}{f(t)}=\frac{\varphi_{\mu,\,t}(z)}{\varphi_{\mu,\,t}(t)}.
    \end{align*}
     So $f\equiv\varphi_{\mu,\,t}$.
    \end{proof}
    Let's assume that $D={\rm{int}}(\overline{D})$ to exclude slit domains from our considerations (a disc with
    one radius removed for instance) and consider "decreasing - like" version of the Ramadanov th. Let us recall the definition of "approximation from outside" given in \cite[Def. V.6; p. 38]{Skwarczy\'nski}.
  \begin{defin}
   We say that a sequence of domains $\{D_n\}_{n=1}^{\infty}$ approximates $D$ from outside if $D\subset D_n$ for all $n$ and for each open $G$ such that $\overline{D}\subset G$ the inclusion $D\subset D_m\Subset G$ holds for all sufficiently large $m$.
  \end{defin}
   \begin{mainthm}
   Let $\{D_n\}_{n=1}^{\infty}$ be a sequence of domains in $\CC^N$ which approximates $D$ from outside and $\mu\in AW(D),\mu_k\in AW(D_k)$ (extend $\mu_k$ by $\mu_n$ on $D_n$ for $k\geq n$, and $\mu$ by $\mu_n$ on $D_n$). Assume moreover that
   \begin{enumerate}
     \item[a)]$\mu(z)\leq\mu_m(z)$\quad for \quad $m\in\NN,\,\,z\in D$.
     \item[b)]$\displaystyle\mu_k\xrightarrow[k\to\infty]{}\mu$ pointwise a.e. on $D$.
   \end{enumerate}
   Then $\{K_{D_m,\,\mu_m}\}_{m=1}^{\infty}$ converges to $K_{D,\,\mu}$ locally uniformly
      on $D\times D$ iff for any fixed $t\in D$
      \begin{align*}
      \lim_{m\to\infty}K_{D_m,\,\mu_m}(t,t)=K_{D,\mu}(t,t).
      \end{align*}
   \end{mainthm}
  \begin{proof}
   We shall only make sure, that the converse implication is true, since the necessity is obvious.
   Let $F\subset D$ be a compact set. Then there is a constant $M=M(F)$ such that $\displaystyle\max_{z\in F}|K_{D,\,\mu}(z,z)|\leq M$.
   By Schwarz inequality

   \begin{align*}
  |K_{D_m,\,\mu_m}(z,t)|
  &\leq \sqrt{K_{D_m,\,\mu_m}(z,z)}\sqrt{K_{D_m,\,\mu_m}(t,t)}\\[12pt]
  &\leq \sqrt{K_{D,\,\mu}(z,z)}\sqrt{K_{D,\,\mu}(t,t)}\leq M.
    \end{align*}
   for any $z,\,t\in F$. Thus $\{K_{D_m,\,\mu_m}\}_{m=1}^{\infty}$ is a Montel family on $D\times D$. It suffices
   to show that every convergent subsequence of this family converges to $K_{D,\,\mu}$.
   With no loss of generality let us consider $\{K_{D_m,\,\mu_m}\}$ itself and assume
   that it does converge to some $k$. For $t\in D$, by Fatou's lemma

   \begin{align*}
  \int_F|k(z,t)|^2\mu(z)dV
  &\leq \liminf_{m\to\infty}\int_F|K_{D_m,\,\mu_m}(z,t)|^2\mu_m(z)dV\\[12pt]
  &\leq \liminf_{m\to\infty}\int_{D_m}|K_{D_m,\,\mu_m}(z,t)|^2\mu_m(z)dV \\[12pt]
  &= \liminf_{m\to\infty}K_{D_m,\,\mu_m}(t,t)=K_{D,\,\mu}(t,t)=k(t,t).
    \end{align*}
   Since $F\subset D$ is an arbitrary compact set,
   \begin{align*}
   ||k(\cdot,t)||_{\mu}^2\leq k(t,t)=K_{D,\,\mu}(t,t)<\infty.
   \end{align*}
   Thus taking in Lemma $9$ $f(\cdot)=k(\cdot,t)$ we obtain $K_{D,\,\mu}(z,t)=k(z,t)$ for any $z,\,t\in D$.
   \end{proof}
   \begin{rem}
   Look that decreasing sequence of domains satisfies assumptions of the Main Theorem $11$.
   This theorem for classical Bergman kernels and decreasing case of domains could be found in \cite[p. 37]{Skwarczy\'nski}. Main Theorem $4$ could be proved in the same fashion using Lemma $9$ (look in \cite{W\'ojcicki}).
   \end{rem}

\subsection{Domain dependence}
  
  In this paragraph, among others, we will give a generalization of \cite[p. 38]{Skwarczy\'nski} for weighted Bergman kernels. Moreover we will show that the converse of this theorem holds as well. We shall start with notation used in this paragraph. The first thing is to extend the weights outside its natural domain (the domain may intersect). Let $\{D_n\}_{n=1}^{\infty}$ be an approximating sequence for $D$. Let $\mu_n\in AW(D_n)$ for $n\in\NN$. Then for $k\geq 2$ we define :

    \begin{displaymath}
    \widetilde{\mu_k(z)} = \left\{ \begin{array}{ll}
    \mu_k(z) & \textrm{ for $z\in D_k$ }\\
    \widetilde{\mu_{k-1}(z)} & \textrm{ for $z\in D_1\cup\ldots\cup D_{k-1}\setminus D_k$ }
    \end{array} \right.
    \end{displaymath}
    See that $\widetilde{\mu_k(z)}$ is well defined on $D_1\cup\ldots\cup D_k$.
    For example

    \begin{displaymath}
    \widetilde{\mu_3(z)} = \left\{ \begin{array}{ll}
    \mu_3(z) & \textrm{ for $z\in D_3$ }\\
    \widetilde{\mu_2(z)} & \textrm{ for $z\in D_1\cup D_{2}\setminus D_3$ }
    \end{array} \right.
    = \left\{ \begin{array}{ll}
    \mu_3(z) & \textrm{ for $z\in D_3$ }\\
    \mu_2(z) & \textrm{ for $z\in D_{2}\setminus D_3$ }\\
    \mu_1(z) & \textrm{ for $z\in(D_1\setminus D_{2})\setminus D_3$ }
    \end{array} \right.
    \end{displaymath}
    We will now define the extension of $\mu(z)$ outside $D$.
    Since $\{D_n\}_{n=1}^{\infty}$ is an approximating sequence for $D$ then for large $s\in\NN$ we have $D\subset D_s\Subset D_1$.
    We define
     \begin{displaymath}
    \widetilde{\mu(z)} = \left\{ \begin{array}{ll}
    \mu_1(z) & \textrm{ for $z\in D_1$ }\\
    \mu_2(z) & \textrm{ for $z\in D_2\setminus D_1$ }\\
    \mu_3(z) & \textrm{ for $z\in D_3\setminus(D_1\cup D_2)$ }\\
    \ldots \\
    \mu_{s-1}(z)&\textrm{ for $z\in D_{s-1}\setminus(D_1\cup D_2\ldots\cup D_{s-2})$ }
    \end{array} \right.
    \end{displaymath}

     Let $E\subset\CC^N$ be Lebesque measurable, $\mu$ be a-weight on $E$ (we set that $\mu\in AW(E)$ if for some open set $W\subset\CC^N$ s.t. $E\subset W$ there is $\nu\in AW(W)$ s.t. $\nu_{|E}=\mu$) and $L^2(E,\mu)$ be the Hilbert space of all complex-valued functions which are square $\mu-$ integrable on a set $E$ and holomorphic in the interior of $E$. Let's moreover $H(E,\mu)$ be the subset of $L^2(E,\mu)$ consisting of all functions possessing holomorphic extension to an open neighborhood of $E$. We will need the following :
  
  \begin{propp}
  $H(E,\mu)$ is dense in $L^2(E,\mu)$.
  \end{propp}
  \begin{mainthm}
  Suppose that $\overline{D}$ has Property $13$, and
  a sequence $D_m$ approximates $D$ from outside.
  Let $\mu\in AW(\overline{D}),\mu_k\in AW(D_k)$ (extend $\mu_k$ and $\mu_n$ as mentioned above). 
  Assume moreover, that for some $p\in\NN$
   \begin{enumerate}
     \item[a)]$\mu(z)\leq\mu_m(z)\leq\mu_{p}(z)$\,\,for\,\,$p\leq m$ and $z\in D_{p}$
     \item[b)]$\mu_{p}\in L^1(D_{p})$
     \item[c)]$\displaystyle\mu_k\xrightarrow[k\to\infty]{}\mu$ pointwise a.e. in $D_{p}$
   \end{enumerate}
  Then
  \begin{align*}
  \lim_{m\to\infty}K_{D_m,\mu_m}=K_{D,\mu}
  \end{align*}
   locally uniformly on $D\times D$.
  \end{mainthm}
  
  \begin{proof}
  Let $t\in D$ and $f\in L^2_H(D,\mu)$ be fixed. We can extend $f$ by $0$ on $\partial D$, to provide
  $f\in L^2(\overline{D},\mu)$. Consider any $h\in H(\overline{D},\mu)$. Then for $m$ large enough, $h\in L^2_H(D_m,\mu_m)$ (because of $b)$).
  We have
   \begin{align*}
  |h(t)|
  &= \left|\int_{D_m}h(z)K_{D_m,\,\mu_m}(z,t)\mu_m(z)dV\right|\\[12pt]
  &= \left|\int_{D_m}h(z)\mu_m(z)^{1/2}K_{D_m,\,\mu_m}(z,t)\mu_m(z)^{1/2}dV\right| \\[12pt]
  &\leq ||h||_{\mu_m}\left(\int_{D_m}|K_{D_m,\mu_m}(z,t)|^2\mu_m(z)dV\right)^{1/2}\\[12pt]
  &= ||h||_{\mu_m} K_{D_m,\mu_m}(t,t)^{1/2}.
    \end{align*}
  In the limit $m\to\infty$ we get (by Domin. Conv. Th.) $|h(t)|\leq k(t,t)^{1/2}||h||_{E,\mu}$\,,
  \newline where $\displaystyle k(t,t)=\lim_{m\to\infty}
  K_{D_m,\mu_m}(t,t)$. By density Property $13$ there is a sequence $\{h_m\}$ of functions in $H(\overline{D},\mu)$ such that $h_m\xrightarrow{L^2(\overline{D},\mu)}f$ (thus locally uniformly on $D$). So
  \begin{align*}
  |f(t)|\leq k(t,t)^{1/2}||f||_{\overline{D},\mu}= k(t,t)^{1/2}||f||_{D,\mu}.
  \end{align*}
  So for $f(t)=K_{D,\mu}(t,t)$ we have
  \begin{align*}
  |K_{D,\mu}(t,t)|=K_{D,\mu}(t,t)\leq k(t,t)^{1/2}||K_{D,\mu}||_{\mu}=k(t,t)^{1/2}K_{D,\mu}(t,t)^{1/2}.
  \end{align*}
  Thus $K_{D,\mu}(t,t)\leq k(t,t)$. Obviously $K_{D_m,\mu_m}(t,t)\leq K_{D,\mu}(t,t)$. In the limit
  $m\to\infty$ we get $k(t,t)\leq K_{D,\mu}(t,t)$. Therefore
  \begin{align*}
  K_{D,\mu}(t,t)=k(t,t)=\lim_{m\to\infty}K_{D_m,\mu_m}(t,t).
  \end{align*}
  The hypothesis follows from the Main Theorem $11$.
  \end{proof}
 What is interesting, it turns out that some kind of the converse of Main Theorem $14$ holds as well, namely :
 \begin{mainthm}
  Let $D\subset\CC^N$ be a domain and let $\mu$ be a weight on the closure $\overline{D}$ of $D$ in $\CC^N$ s.t. the measure of the boundary $\partial D$ given by $\mu$ is equal to $0$ and $\mu_{|D}\in AW(D)$. Suppose that for some sequence $\{D_n\}$ approximating $D$ from outside, and some sequence of admissible weights $\{\mu_n\}$
  (where $\mu_n\in AW(D_n)$)
   \begin{align*}
   \lim_{n\to\infty}{K_{D_n, \mu_n}}_{|D}=K_{D, \mu}
   \end{align*}
    holds locally uniformly on $D\times D$; for any $t\in D, K_{D_n,\mu_n}(\cdot,t)\in L^2_H(D,\mu)$ and
  \begin{align*}
  \lim_{n\to\infty}||K_{D_n,\mu_n}(\cdot,t)||^2_{\mu}=||K_{D,\mu}(\cdot,t)||^2_{\mu}=K_{D,\mu}(t,t).
  \end{align*}Then Property $13$ holds.
 \end{mainthm}
 \begin{proof}
 For any $t\in D$ we have
  \begin{align*}
  & ||{K_{D_n, \mu_n}}_{|D}(\cdot,t)-K_{D,\mu}(\cdot,t)||^2_{\mu} \\[12pt]
  &= \int_D(K_{D_n,\mu_n}(z,t)-K_{D,\mu}(z,t))\overline{(K_{D_n,\mu_n}(z,t)-K_{D,\mu}(z,t))}\mu(z)dV\\[12pt]
  &= \int_D|K_{D_n,\mu_n}(z,t)|^2\mu(z)dV - \int_D K_{D,\mu}(t,z)K_{D_n,\mu_n}(z,t)\mu(z)dV\\[12pt]
  &- \overline{\int_D K_{D,\mu}(t,z)K_{D_n,\mu_n}(z,t)\mu(z)dV}+\int_D|K_{D,\mu}(z,t)|^2\mu(z)dV \\[12pt]
  &= ||K_{D_n,\mu_n}(\cdot,t)||^2_{\mu}-K_{D_n,\mu_n}(t,t)-\overline{K_{D_n,\mu_n}(t,t)}+K_{D,\mu}(t,t)\\[12pt]
  &= ||K_{D_n,\mu_n}(\cdot,t)||^2_{\mu}-2K_{D_n,\mu_n}(t,t)+K_{D,\mu}(t,t).
  \end{align*}
 By assumptions
  \begin{align*}
   \lim_{n\to\infty}||{K_{D_n,\mu_n}}_{|D}(\cdot,t)-K_{D,\mu}(\cdot,t)||^2_{\mu}=0.
  \end{align*}
 which means that the closure in $L^2-$ norm
   \begin{align*}
   cl\{K_{D,\,\mu}(\cdot,t),t\in D\}\subset cl\{K_{D_n,\,\mu_n}(\cdot,t), t\in D, n\in\NN \}\subset L^2_H(\overline{D},\mu).
  \end{align*}
 On the other hand, by reproducing property (\cite{Pasternak-Winiarski})
  \begin{align*}
   cl \{K_{D,\,\mu}(\cdot,t),t\in D\}=L^2_H(D,\mu)=L^2_H(\overline{D},\mu).
  \end{align*}
 Taking into account that
  \begin{align*}
    \{K_{D_n,\mu_n}(\cdot,t),t\in D, n\in\NN \}\subset H(\overline{D},\mu)
  \end{align*}
 we obtain desired result.
 \end{proof}
 \begin{rem}
 Look also in \cite{Skwarczy\'nski; Iwi\'nski} for some considerations concerning unweighted, decreasing case of Main Thm. $15$ and very interesting remarks. Look that taking for any $n,\,\,\mu_n\equiv 1$ we get in fact that Property $13$ and hypothesis of the Main Theorem $14$ are equivalent, which gives us a description of the domains, for which "decresing-like" version of Ramadanov theorem holds. Moreover, using Main Theorem $4$ we can prove a weighted version of counterexample to the Lu Qi-Keng conjecture given in \cite{Boas}.
 \end{rem}

\section*{Acknowledgements}
We are very grateful to Prof. Harold Boas for his very helpful advices on the Main Theorem $4$ and other suggestions. We would like to express our thanks to Prof. Maciej Skwarczy\'nski $(\dagger)$ for his passion and a great book for students, and his contribution to the theory of Bergman kernels. We would like to thank our families for supporting us through their presence.

\bigskip

{\noindent
Zbigniew Pasternak-Winiarski\\
Faculty of Mathematics and Information Science\\
Warsaw University of Technology\\
Koszykowa 75,  00-662 Warsaw, Poland\\
E-mail: \,z.pasternak-winiarski@mini.pw.edu.pl}

\medskip

{\noindent
Pawe{\l} M. W\'ojcicki\\
Faculty of Mathematics and Information Science\\
Warsaw University of Technology\\
Koszykowa 75,  00-662 Warsaw, Poland\\
E-mail: \,p.wojcicki@mini.pw.edu.pl}

\bigskip



\rightline{}

\end{document}